\begin{document}

\def\K{\mathbb{K}}
\def\R{\mathbb{R}}
\def\C{\mathbb{C}}
\def\Z{\mathbb{Z}}
\def\Q{\mathbb{Q}}
\def\D{\mathbb{D}}
\def\N{\mathbb{N}}
\def\T{\mathbb{T}}
\def\P{\mathbb{P}}
\def\A{\mathscr{A}}
\def\CC{\mathscr{C}}
\def\supp{\mathrm{supp}}
\renewcommand{\theequation}{\thesection.\arabic{equation}}
\newtheorem{theo}{Theorem}[section]

\newtheorem{lemma}{Lemma}[section]
\newtheorem{coro}{Corollary}[section]
\newtheorem{prop}{Proposition}[section]
\newtheorem{definition}{Definition}[section]
\newtheorem{remark}{Remark}[section]

\newtheorem{example}{Example}[section]
\newtheorem{notation}{Notation}
\newtheorem{con}{Consequence}
\bibliographystyle{plain}
\theoremstyle{plain}

\title{\textbf {Solving the $\partial \overline{\partial}$ for extendable currents without vanishing the boundary cohomology group}}

\author{Mamadou Eramane Bodian$^{\ast}$ }
\affil{University Assane Seck of Ziguinchor, BP: 523 (Sénégal)}
\affil{m.bodian@uni-zig.sn$^{\ast}$}
\author{ Souhaibou Sambou }
\affil{University Gaston Berger of Saint-Louis (Sénégal)}
\affil{souhaibou.sambou@ugb.edu.sn }

\author{ S\'eny Diatta }
\affil{University Assane Seck of of Ziguinchor, BP: 523 (Sénégal)}
\affil{s.diatta1375@zig.univ.sn }

\author{ Salomon Sambou }
\affil{University Assane Seck of Ziguinchor, BP: 523 (Sénégal)}
\affil{ssambou@univ-zig.sn }
\date{}
%\subjclass{}

\maketitle
%%%%%%%%%%%%%%%%%%%%%%%%%%%%%%%%%%%%%%%%%%%%%%%%%%%%%%%%%%%%%%%%%%%%%%%%%%%%%%%%%%%%%%%%%%%%%%%%%%%%%%%

%%%%%%%%%%%%%%%%%%%%%%%%%%%%%%%%%%%%%%%%%%%%%%%%%%%%%%%%%%%%%%%%%%%%%%%%%%%%%%%%%%%%%%%%%%%%%%%%%%%%%%%%%%
%%%%%%%%%%%%%%%%%%%%%%%%%%%%%%%%%%%%%%%%%%%%%%%%%%%%%%%%%%%%%%%%%%%%%%%%%%%%%%%%%%%%%%%%%%%%%%%%%%%%%%%%%% 
\begin{abstract}~\\
In this paper, we consider the problem of solving the $\partial\overline{\partial}$ equation with discribed support for differential forms in a relatively compact domain $\Omega$ of a complex analytic manifold $X$. And as a consequence, we have the solution of the equation $\partial\overline{\partial}$ for extendable currents without the annulation assumption of the De Rham cohomology group of the boundary.\\
\textbf{Keywords:}  operator $\partial\overline{\partial}$, De Rham cohomology group, Dolbeault cohomology group, Bott-Chern cohomology group,Applie cohomology group, discribed support, extendable currents.   
\vskip 1.3mm
\noindent
{\bf Mathematics Subject Classification (2020,2010)} 32D15, 32D20, 31C10, 32L20 32F32.
\end{abstract}

\section*{Statements and Declarations}
\subsection*{Competing Interests}
 The authors attest that there are  non-financial interests that are directly or indirectly related to the work submitted for publication.
\subsection*{Data availability statements}
We have not included any data in this manuscript.
\subsection*{Ethics approval and consent to participate}
Ethical authors approved and consented to participate.
\subsection*{Consent for publication }
The authors have consented to the publication of the article.
\subsection*{Funding}
The authors have no funding.
\subsection*{Authors' contributions}
The authors have worked fairly in the development and work of the article.
\subsection*{Acknowledgements }
The authors have no acknowledgments.

\section*{Introduction}

 In this paper, we are interested in the $\partial \overline{\partial}$ problem with discribed support for differential forms and for currents. The problem is as follows,
let $X$ be an analytic complex manifold and $\Omega \subset X$ a relatively compact domain such that $X \setminus \Omega$ is connected. Then, under certain assumptions on the vanishing of De Rham and Dolbeault cohomology groups with discribed support, we have $$H^{1,1}_{BC, \overline{\Omega}}(X) = 0.$$ Our first result is the generalization of \cite[Theorem 1.2]{WG} to the case where $X$ is a Stein manifold and as an assumption about the vanishing of the $2$-th De Rham cohomology group with compact support see Theorem \ref{a}. Secondly, we give a more general result (see Theorem \ref{s}), where $X$ is an analytic complex  manifold assuming that the compact De Rham and Dolbeault cohomology groups $$H^2_{c}(X) = H^{0,1}_{c}(X) = 0.$$ The corollary \ref{b} is a solution of the $\partial \overline{\partial}$ for $(1,1)$-differential forms in a completely pseudoconvex smooth-boundary domain with discribed support. And by duality, we have the solution of $\partial \overline{\partial}$ for $(n-1,n-1)$-extendable currents without the vanishing hypothesis on the De Rham cohomology group of the boundary see corollary \ref{c}. Finally, we etablish in theorem \ref{Era} in the last section, the extension of these results to differential (p,q)-forms under certain conditions and as a consequence we have by duality the solution of the $\partial \overline{\partial}$-problem for $(n-p,n-q)$-extendable currents see corollary \ref{M}.

In case of solving the $\partial
\bar{\partial}$ equations for extendable currents, vanishing the De Rahm cohomology group of the boundary was a necessary assumption for the
solution with exact support. See for exemple \cite{SBD, SBIH}.    However this present result removes this
ambiguity.

\section{Notations}
We fix the following notations:
Let $X$ be a complex manifold
\begin{itemize}
\item[•] $\displaystyle{ \mathcal{E}^k (X) :=}$  the space of $k$-form of class $C^{\infty}$ on $X$;
\item[•] $\displaystyle{\mathcal{D}^k(X) :=}$  the space of $k$-form of class $C^\infty$ with compact support;
\item[•] $\displaystyle{\mathcal{D}^{'k}(X) :=}$ the space of $k$-current of $X$.
\end{itemize}

For every $k \in \{0,1,\ldots,2n\}$, we have
 \[ \displaystyle{\mathcal{E}^k(X) = \oplus_{p+q=k} \mathcal{E}^{p,q}(X)} \]
and the complex structure of $X$ induices a splitting 
\[ d = \partial + \overline{\partial}.\]
For every $p,q \in \{1, \ldots,n \}$, we define the Bott-Chern cohomology group of smooth $(p,q)$-forms on $X$ as
\[H_{BC}^{p,q}(X) = \frac{\left(\ker \partial : \mathcal{E}^{p,q}(X) \rightarrow \mathcal{E}^{p+1,q}(X) \right) \cap \left( \ker \overline{\partial} : \mathcal{E}^{p,q}(X) \rightarrow \mathcal{E}^{p,q+1}(X) \right)}{Im \partial \overline{\partial}: \mathcal{E}^{p-1,q-1}(X) \rightarrow \mathcal{E}^{p,q}(X).  }\]
 $\displaystyle{H_{BC,\overline{\Omega}}^{p,q}(X)}$ is the Bott-Chern cohomology group of smooth $(p,q)$-forms with support in $\overline{\Omega}.$\\
We define the Applie cohomology group of smooth $(p,q)$-forms on $X$ as 

\[H_{A}^{p,q}(X) = \frac{\ker (\partial \overline{\partial }) : \mathcal{E}^{p,q}(X) \rightarrow \mathcal{E}^{p+1,q+1}(X)}{Im \partial: \mathcal{E}^{p-1,q}(X) \rightarrow \mathcal{E}^{p,q}(X) + Im \overline{\partial}:  \mathcal{E}^{p,q-1}(X) \rightarrow \mathcal{E}^{p,q}(X).  }\]

Case of either $p=0$ or$q=0$. For example, if $q=0$, then the $(p,0)$ Bott-Chern cohomology group is given, from definition, by
\[H_{BC}^{p,0}(X) = \{ f \in \Gamma(X, \Omega_{X}^{p}), | ~\partial f :=0 \}, \]
where $\Omega_{X}^{p}$ is the sheaf of holomorphic $p$-forms on $X.$ Thanks to the symmetric property of Bott-Chern cohomology group, we have  
\[H_{BC}^{0,p}(X) =  \overline{H_{BC}^{p,0}(X) }. \]

$H_{A}^{0,0}(X)$ is the set of pluriharmonic functions on $X.$

The Bott-Chern cohomology group of $(p,q)$-forms with compact support is defined as

\[H_{BC,c}^{p,q}(X) = \frac{\left(\ker \partial : \mathcal{D}^{p,q}(X) \rightarrow \mathcal{D}^{p+1,q}(X) \right) \cap \left( \ker \overline{\partial} : \mathcal{D}^{p,q}(X) \rightarrow \mathcal{D}^{p,q+1}(X) \right)}{Im \partial \overline{\partial}: \mathcal{D}^{p-1,q-1}(X) \rightarrow \mathcal{D}^{p,q}(X).  }\]

\section{Solving the $\partial \overline{\partial}$ for extendable currents}

%\subsection*{$\partial \overline{\partial}$ operator for extendable currents}.
The following result generalizes \cite[Theorem 1.2]{WG},
\begin{theo} \label{a}~\\

Let $X$ be a Stein manifold of complex dimension $n \geq 2$ such that $$H^2_{c}(X) = 0.$$ Let $\Omega \subset X$ be a relatively compact domain such that $X \setminus \Omega$ is connected. 
Then $$H^{1,1}_{BC, \overline{\Omega}}(X) = 0.$$

\end{theo}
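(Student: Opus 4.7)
My strategy is to first find a $d$-primitive of $f$ using the hypothesis $H^2_c(X)=0$, then split by bidegree and use the Stein hypothesis to extract a pluripotential, and finally use the connectedness of $X\setminus \Omega$ together with unique continuation of pluriharmonic functions to confine the support of the potential to $\overline{\Omega}$.

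Fix $f\in \mathcal{E}^{1,1}(X)$ with $\supp f\subset \overline{\Omega}$, $\partial f=0$ and $\overline{\partial} f=0$. Since $\Omega$ is relatively compact, $f\in \mathcal{D}^2(X)$ is $d$-closed, so $H^2_c(X)=0$ yields some $u\in\mathcal{D}^1(X)$ with $du=f$. Writing $u=u^{1,0}+u^{0,1}$ and identifying bidegrees in $du=f$ gives $\partial u^{1,0}=0$, $\overline{\partial}u^{0,1}=0$, and $\overline{\partial}u^{1,0}+\partial u^{0,1}=f$. Because $X$ is Stein of complex dimension $n\geq 2$, Serre duality together with Cartan's Theorem~B gives $H^{0,1}_c(X)\cong H^{n,n-1}(X)^{*}=0$, and complex conjugation yields the analogous vanishing for compactly supported $\partial$-closed $(1,0)$-forms. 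Hence there exist $v,w\in\mathcal{D}(X)$ with $u^{0,1}=\overline{\partial} v$ and $u^{1,0}=\partial w$, and setting $g:=v-w$ the identity $\overline{\partial}\partial=-\partial\overline{\partial}$ gives $\partial\overline{\partial}g=f$. At this stage $g$ is smooth and compactly supported in $X$, but a priori not in $\overline{\Omega}$.

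It remains to upgrade to $\supp g\subset \overline{\Omega}$. On $U:=X\setminus\overline{\Omega}$ one has $\partial\overline{\partial}g=0$, so $g|_U$ is pluriharmonic and in particular real-analytic; and since $\supp g\subset K$ is compact, $g$ vanishes on the open set $U\setminus K$. Real-analytic unique continuation then forces $g\equiv 0$ on every connected component of $U$ that meets $U\setminus K$, and the connectedness of $X\setminus\Omega$ is used to exclude relatively compact components of $U$: in the smooth-boundary case a relatively compact component $C\subset U$ would have $\partial C\subset\partial\Omega$, making $\overline{C}$ a proper clopen subset of $X\setminus\Omega$ and contradicting its connectedness. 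Thus $g\equiv 0$ on $U$, so $\supp g\subset\overline{\Omega}$ and $[f]=0$ in $H^{1,1}_{BC,\overline{\Omega}}(X)$.

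The step I expect to require the most care is the last topological passage, namely deducing from the connectedness of $X\setminus\Omega$ that $X\setminus\overline{\Omega}$ has no relatively compact component: this is transparent under mild boundary regularity, but the boundary $\partial\Omega$ can in principle glue together components of $X\setminus\overline{\Omega}$ without disconnecting $X\setminus\Omega$, and so this is the point where the hypotheses on $\Omega$ really have to be exploited. The other delicate ingredient is the Serre-duality identification $H^{0,1}_c(X)=0$ on a Stein manifold of complex dimension $\geq 2$, which is what allows one to pass from the $d$-primitive $u$ to compactly supported $\partial$- and $\overline{\partial}$-primitives.
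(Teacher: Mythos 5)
Your first half reproduces the paper's argument exactly: use $H^2_c(X)=0$ to get a compactly supported $d$-primitive, split by bidegree, and use the Stein hypothesis (which you correctly justify via Serre duality as $H^{0,1}_c(X)=0$ for $n\geq 2$) to obtain a compactly supported smooth function $g$ with $\partial\overline{\partial}g=f$. The gap is precisely where you flag it, and it is a real one. Your unique-continuation argument only forces $g$ to vanish on those components of $U=X\setminus\overline{\Omega}$ that meet $U\setminus\mathrm{supp}\,g$; a component of $U$ contained in $\mathrm{supp}\,g$ is untouched, and the stated hypotheses do not exclude such components. The theorem assumes no boundary regularity, and connectedness of $X\setminus\Omega$ does not imply that $X\setminus\overline{\Omega}$ has no relatively compact component: take $X=\mathbb{C}^2$ and $\Omega=B(0,2)\setminus D$ with $D$ a closed ball internally tangent to the sphere $\{|z|=2\}$ at a single point $p$. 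Then $\Omega$ is a connected relatively compact domain, $X\setminus\Omega=(X\setminus B(0,2))\cup D$ is connected because the two pieces share $p$, and yet $\mathrm{int}(D)$ is a relatively compact connected component of $X\setminus\overline{\Omega}$ on which $g$ may be a nonzero pluriharmonic function. Your claim that $\overline{C}$ would be clopen in $X\setminus\Omega$ fails exactly at the tangency point, so the argument cannot be completed as written under the theorem's hypotheses.

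The paper closes this gap with a different device, which is the real content of the support-correction step: it invokes the Hartogs-type extension theorem for pluriharmonic functions of \cite[Theorem 1.1]{WG} to extend the pluriharmonic function $g|_{X\setminus\overline{\Omega}}$ to a pluriharmonic function $\tilde{g}$ on all of $X$, and then replaces $g$ by $g-\tilde{g}$, which satisfies $\partial\overline{\partial}(g-\tilde{g})=\partial\overline{\partial}g=f$ and vanishes on $X\setminus\overline{\Omega}$ by construction. This extension-and-subtraction argument handles precisely the case your proof cannot, namely a nonzero pluriharmonic restriction of $g$ to a relatively compact component of the complement. To repair your proof you would either need to import that extension theorem, or add a hypothesis (for example, that $X\setminus\overline{\Omega}$ itself is connected, or a boundary regularity condition ruling out relatively compact components) under which your unique-continuation step suffices.
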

\begin{proof}~\\

Let $f$ be a $(1,1)$-differential form of class $C^\infty$, $d$-closed with support on $\overline{\Omega}$. Since $H^2_{c}(X) =0$, there exists a $1$-differential form $g$ of class $C^\infty$ with compact support such that $dg = f$.  
We have $g= g_1+ g_2$ where $g_1$ and $g_2$ are respectively a $(0,1)$ differential form of class $C^\infty$ with compact support and a $(1,0)$ differential form of class $C^\infty$ with compact support. We have $dg= dg_1 +dg_2=f$.
Since $d= \partial + \overline{\partial}$ and for bidegree reasons we have $\partial g_2 =0$ and $\overline{\partial} g_1 =0$. Since $X$ is Stein, there exist functions $w_1$ and $w_2$ of class $C^\infty$ with compact support such that $g_1 = \overline{\partial}w_1$ and $g_2= \partial w_2$.
\begin{center}
$f = \partial g_1+ \overline{\partial}g_2= \partial \overline{\partial}w_1 +\overline{\partial}\partial w_2 = \partial \overline{\partial}(w_1 - w_2)$.
\end{center}

We set $v= w_1 - w_2$, then $v$ is compactly supported and $\partial \overline{\partial}v =0$ on $X \setminus \overline{\Omega}$. According to \cite[theorem 1.1]{WG}, there exists a pluriharmonic function $\tilde{v}$ defined on $X$ such that $\tilde{v}_{X \setminus \overline{\Omega}}= v$. We set $$\check{v}= v - \tilde{v}.$$
Then we have $supp( \check{v}) \subset \overline{\Omega}$ and $\partial \overline{\partial} \check{v}= f$. So $$H^{1,1}_{BC, \overline{\Omega}}(X) = 0.$$
\end{proof}
More generally, we have the following result
\begin{theo}\label{s}~\\

Let $X$ be an  analytic complex manifold of complex dimension $n \geq 2$ such that $$H^2_{c}(X) = H^{0,1}_{c}(X) = 0,$$ and $\Omega \subset X$ be a relatively compact domain such that $X \setminus \Omega$ is connected. 

Then, $$H^{1,1}_{BC, \overline{\Omega}}(X) = 0.$$

\end{theo}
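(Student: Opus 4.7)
The plan is to mirror the proof of Theorem \ref{a}, replacing the two appeals to the Stein hypothesis by direct applications of the compactly supported Dolbeault vanishing. Let $f$ be a smooth $(1,1)$-form supported in $\overline{\Omega}$ with $\partial f = \overline{\partial} f = 0$, hence $df = 0$. The assumption $H^2_c(X) = 0$ produces $g \in \mathcal{D}^1(X)$ with $dg = f$; decomposing $g = g_1 + g_2$ into its $(0,1)$ and $(1,0)$ pieces and comparing bidegrees in $dg = f$ gives $\overline{\partial} g_1 = 0$, $\partial g_2 = 0$, and $\partial g_1 + \overline{\partial} g_2 = f$, exactly as in the previous theorem.

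Where Theorem \ref{a} used $X$ Stein to solve $\overline{\partial} w_1 = g_1$ and $\partial w_2 = g_2$ with compactly supported potentials, I would instead apply $H^{0,1}_c(X) = 0$ directly to produce $w_1 \in \mathcal{D}^0(X)$ with $\overline{\partial} w_1 = g_1$. For the $(1,0)$-form $g_2$, complex conjugation turns $\partial w_2 = g_2$ into $\overline{\partial}\,\overline{w_2} = \overline{g_2}$, and $\overline{g_2}$ is compactly supported and $\overline{\partial}$-closed, so the same hypothesis $H^{0,1}_c(X) = 0$ (applied to the conjugate equation) gives such a $w_2$. Setting $v := w_1 - w_2$ then yields a compactly supported smooth function on $X$ satisfying $\partial \overline{\partial} v = \partial g_1 + \overline{\partial} g_2 = f$.

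The main obstacle is the pluriharmonic extension step: since $f \equiv 0$ on $X \setminus \overline{\Omega}$, the function $v$ is pluriharmonic there, and the proof needs a global pluriharmonic function $\tilde v$ on $X$ with $\tilde v = v$ on $X \setminus \overline{\Omega}$, after which $\check v := v - \tilde v$ is supported in $\overline{\Omega}$ and satisfies $\partial \overline{\partial} \check v = f$. In Theorem \ref{a} this was handled by \cite[theorem 1.1]{WG}, which relied on the Stein hypothesis; here one must rerun the argument under only the weaker cohomological assumptions $H^2_c(X) = H^{0,1}_c(X) = 0$ together with the connectedness of $X \setminus \Omega$. A natural route is to examine $\alpha := \partial v$: it is a compactly supported $(1,0)$-form with $\overline{\partial} \alpha = -f$ compactly supported in $\overline{\Omega}$, and $d\alpha$ is a compactly supported $d$-closed $2$-form, so $H^2_c(X) = 0$ provides a compactly supported primitive; combining this with $H^{0,1}_c(X) = 0$ should allow one to adjust $\alpha$ by a compactly supported $\overline{\partial}$-exact term to a globally holomorphic $(1,0)$-form whose real primitive (single-valued thanks to the connectedness of $X \setminus \Omega$ pinning down the constant) furnishes the required $\tilde v$. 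The delicate step is precisely this integration-and-single-valuedness argument, which is where the topological hypothesis on $X \setminus \Omega$ enters crucially; the first and last moves of the proof are then parallel to those of Theorem \ref{a}.
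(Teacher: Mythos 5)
Your opening moves coincide with the paper's: use $H^2_c(X)=0$ to get a compactly supported $1$-form $g$ with $dg=f$, split $g=g_1+g_2$ by bidegree, observe $\overline{\partial}g_1=0$ and $\partial g_2=0$, and solve $g_1=\overline{\partial}w_1$, $g_2=\partial w_2$ with compactly supported functions using $H^{0,1}_c(X)=0$ (your remark that the equation for $w_2$ is the conjugate of a $\overline{\partial}$-problem is correct, and in fact more careful than the paper, which invokes $H^{0,1}_c(X)=0$ for both without comment). The divergence, and the genuine gap, is in the last step. You treat the problem as one of \emph{extending} $v|_{X\setminus\overline{\Omega}}$ to a global pluriharmonic function $\tilde v$, as in Theorem \ref{a}, and then sketch a replacement for \cite[Theorem 1.1]{WG} via $\alpha=\partial v$, an adjustment by $\overline{\partial}$-exact terms, and integration of a holomorphic $1$-form. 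That sketch is a programme, not an argument: you yourself flag the single-valuedness of the primitive as unresolved, and nothing in the hypotheses $H^2_c(X)=H^{0,1}_c(X)=0$ obviously produces a global holomorphic $1$-form with a single-valued real primitive. As written, the proof is incomplete at exactly the point where it departs from Theorem \ref{a}.

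The paper sidesteps the extension problem altogether. The function $v=w_1-w_2$ has compact support, so it vanishes identically on the nonempty open set $X\setminus\supp v$; on the other hand it is pluriharmonic on $X\setminus\overline{\Omega}$. Pluriharmonic functions are real-analytic, so by the identity principle on the connected complement (this is where the hypothesis that $X\setminus\Omega$ is connected is actually consumed), $v\equiv 0$ on all of $X\setminus\overline{\Omega}$. Hence $\supp v\subset\overline{\Omega}$ and $v$ itself is already the required potential: $\partial\overline{\partial}v=f$ with support in $\overline{\Omega}$, and no Hartogs-type extension theorem is needed. If you want to keep your route you must actually prove the extension statement you are implicitly assuming; but the unique-continuation observation makes that entire step unnecessary, and is the intended point of Theorem \ref{s} as opposed to Theorem \ref{a}.
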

\begin{proof}~\\

Let $f$ be a $(1,1)$-differential form of class $C^\infty$, $d$-closed with support on $\overline{\Omega}$. Since $H^2_{c}(X) =0$, there exists a $1$-differential form $g$ of class $C^\infty$ with compact support such that $dg = f.$  

We have $g= g_1+ g_2$ where $g_1$ and $g_2$ are respectively a $(0,1)$ differential form of class $C^\infty$ with compact support and a $(1,0)$ differential form of class $C^\infty$ with compact support. We have $dg= dg_1 +dg_2=f$.
Since $d= \partial + \overline{\partial}$ and for bidegree reasons we have $\partial g_2 =0$ and $\overline{\partial} g_1 =0$. Since $H^{0,1}_{c}(X) = 0$, there exist functions $w_1$ and $w_2$ of class $C^\infty$ with compact support such that $g_1 = \overline{\partial}w_1$ and $g_2= \partial w_2$.
\begin{center}
$f = \partial g_1+ \overline{\partial}g_2= \partial \overline{\partial}w_1 + \overline{\partial}\partial w_2 = \partial \overline{\partial}(w_1 - w_2).$
\end{center}
Let $v= w_1 - w_2$, $v$ is compactly supported and $\partial \overline{\partial}v =0$ on $X \setminus \overline{\Omega}$. Then $v$ is a pluriharmonic function on $X \setminus \overline{\Omega}$ and which cancels on $X \setminus \supp v$. Suppose that $\overline{\Omega}$ is strictly included in $\supp v$. So $v = 0$ on $X \setminus \overline{\Omega}$ which is absurd. So $supp v \subset \overline{\Omega}.$ This implies that $$H^{1,1}_{BC, \overline{\Omega}}(X) = 0.$$
\end{proof}

\begin{coro}\label{b}~\\

Let $X$ be a complex analytic manifold of complex dimension $n \geq 2$ and $\Omega \subset X$ a completely pseudoconvex domain with smooth boundary. Then $$H^{1,1}_{BC, \overline{\Omega}}(X) = 0.$$

\end{coro}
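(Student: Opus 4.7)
The plan is to reduce Corollary \ref{b} to Theorem \ref{s} by passing from $X$ to a sufficiently small Stein neighborhood of $\overline{\Omega}$, on which the two compactly-supported cohomology vanishings required by that theorem can be verified directly.

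First I would use the complete pseudoconvexity of $\Omega$ together with the smoothness of $\partial\Omega$ to produce a fundamental system of Stein open neighborhoods $\{U_{\epsilon}\}_{\epsilon>0}$ of $\overline{\Omega}$ in $X$ (a classical Grauert-type tube construction around the graph of a strictly plurisubharmonic defining function). I would then fix $U=U_{\epsilon}$ with $\epsilon$ small enough that $U\setminus\Omega$ is connected, which is possible because the smoothness of $\partial\Omega$ makes $U\setminus\overline{\Omega}$ a connected one-sided collar of $\partial\Omega$ as soon as $U$ is taken thin enough. In particular $\Omega$ is relatively compact in $U$ and $U\setminus\Omega$ is connected, so the geometric hypotheses of Theorem \ref{s} are in place.

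Next I would verify the two cohomological hypotheses of Theorem \ref{s} with $X$ replaced by $U$. Since $U$ is Stein of complex dimension $n\geq 2$, Serre duality identifies $H^{0,1}_{c}(U)$ with the dual of $H^{n,n-1}(U)$, and the latter vanishes by Cartan's Theorem~B since $n-1\geq 1$. For $H^{2}_{c}(U)$, Andreotti--Frankel together with Poincar\'e duality yields $H^{2}_{c}(U)\cong H_{2n-2}(U)=0$ whenever $n\geq 3$; in the critical dimension $n=2$ one refines the choice of $U$ so that it deformation retracts onto $\overline{\Omega}$, and uses the pseudoconvex structure of $\overline{\Omega}$ to guarantee $H_{2}(\overline{\Omega})=0$.

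Finally, a localization argument identifies $H^{1,1}_{BC,\overline{\Omega}}(X)$ with $H^{1,1}_{BC,\overline{\Omega}}(U)$: any smooth $(1,1)$-form on $X$ with support in $\overline{\Omega}$ restricts to such a form on $U$, and conversely any smooth potential $v$ on $U$ with $\supp v\subset\overline{\Omega}$ and $\partial\overline{\partial}v=f$ extends by zero to a potential on $X$ with the same support. Applying Theorem \ref{s} to the pair $(U,\Omega)$ then yields $H^{1,1}_{BC,\overline{\Omega}}(U)=0$, and hence the corollary. The step I expect to be most delicate is the vanishing of $H^{2}_{c}(U)$ in complex dimension $n=2$, where Andreotti--Frankel alone is not enough and one has to exploit the topological content of the assumption that $\Omega$ is completely pseudoconvex with smooth boundary.
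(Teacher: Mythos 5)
Your overall strategy coincides with the paper's: replace $X$ by a Stein neighborhood of $\overline{\Omega}$ on which the relevant compactly supported cohomology vanishes, then quote one of the earlier theorems (the paper uses Theorem \ref{a}, you use Theorem \ref{s}; since your neighborhood is Stein the difference is immaterial, and your Serre-duality verification of $H^{0,1}_{c}(U)=0$ is correct). The final localization step is also fine, since a potential supported in $\overline{\Omega}\subset U$ extends by zero to $X$.

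The genuine gap is exactly where you flag it: the vanishing of $H^{2}_{c}(U)$ when $n=2$. Your proposed repair --- choose $U$ deformation retracting onto $\overline{\Omega}$ and deduce $H_{2}(\overline{\Omega})=0$ from pseudoconvexity --- does not work as stated. Morse theory for a strictly plurisubharmonic defining or exhaustion function only shows that a completely pseudoconvex domain has the homotopy type of a CW complex of real dimension at most $n$, which for $n=2$ is precisely dimension $2$ and so does not exclude $H_{2}\neq 0$; a large sublevel set of a strictly plurisubharmonic exhaustion of a Stein surface with nontrivial second homology (for instance the affine quadric, diffeomorphic to the tangent bundle of the $2$-sphere) defeats the claim you need. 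The paper sidesteps this by invoking \cite[Theorem 2.3]{SS}, which is asserted to supply a completely strictly pseudoconvex $\Omega'$ with $\Omega\subset\Omega'\subset\subset X$ and $H^{r}(\Omega')=0$ for all $r\leq 2$; that topological vanishing is an output of the cited construction, not something you can read off from $\overline{\Omega}$ alone. A secondary point: your assertion that $U\setminus\overline{\Omega}$ is a \emph{connected} one-sided collar for $U$ thin enough presupposes that $\partial\Omega$ is connected; if the boundary has several components the collar is disconnected and the hypothesis that the complement of $\Omega$ be connected, required by Theorems \ref{a} and \ref{s}, fails for thin $U$. The paper is silent on this point as well, but it is not something smoothness of the boundary gives you for free.
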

\begin{proof}~\\

Since $\Omega$ has a smooth boundary, according to \cite[Theorem 2.3]{SS}, there exists a completely strictly pseudoconvex domain $\Omega'$ such that $\Omega \subset \Omega' \subset \subset X$ and that $H^r(\Omega')= H^r(\Omega') =0$ for $r \leq 2$. By Poincaré duality $H^{n-r}_c(\Omega')=0$ $\Longleftrightarrow$ $H^{j}_c(\Omega')=0$ for $0 \leq j \leq n-2$. In particular $H^{2}_c(\Omega')=0$ and $\Omega'$ is also a Stein domain. According to theorem \ref{a}, $$H^{1,1}_{BC, \overline{\Omega}}(\Omega') = H^{1,1}_{BC, \overline{\Omega}}(X) = 0.$$
\end{proof}

\begin{remark}
Corollary \ref{b} shows that if $f$ is a $(1.1)-$differential form of class $C^\infty$ with support in $\overline{\Omega}$ and $d$-closed, then there exists a function $u$ of class $C^\infty$ with support in $\overline{\Omega}$ such that $\partial \overline{\partial} u= f.$
\end{remark}
By duality, we have the following result.
\begin{coro}\label{c}~\\
Under the assumptions of the corollary \ref{b}, if $T$ is a $(n-1,n-1)$-extendable current, $d$-closed and defined on $\Omega$, then there exists a $(n-2,n-2)$-extendable current $S$ such that $\partial \overline{\partial} S= T.$
\end{coro}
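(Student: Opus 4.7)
The plan is to deduce the result from Corollary \ref{b} by duality. The space $\mathcal{E}^{p,q}_{\overline{\Omega}}(X)$ of smooth $(p,q)$-forms on $X$ with support in $\overline{\Omega}$ is in topological duality with the space of extendable $(n-p,n-q)$-currents on $\Omega$, via the pairing
\[
(\varphi, T) \;\longmapsto\; \langle \tilde{T}, \varphi \rangle,
\]
where $\tilde{T}$ is any extension of $T$ to a current on $X$ with $\supp \tilde{T} \subseteq \overline{\Omega}$; the value is independent of the chosen extension because forms with support in $\overline{\Omega}$ vanish to infinite order on $\partial \Omega$, so the ambiguity (a current supported on $\partial\Omega$) pairs trivially with such $\varphi$. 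Under this duality, the operator $\partial \overline{\partial}$ is, up to sign, self-transpose.

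Given an extendable, $d$-closed $(n-1,n-1)$-current $T$ on $\Omega$, I would first fix an extension $\tilde{T}$ to $X$ supported in $\overline{\Omega}$; the assumption $dT=0$ on $\Omega$ then forces $\supp(d\tilde{T}) \subseteq \partial\Omega$. Corollary \ref{b} asserts that the map $\partial \overline{\partial}: \mathcal{E}^{0,0}_{\overline{\Omega}}(X) \to \mathcal{E}^{1,1}_{\overline{\Omega}}(X)$ has as image exactly the subspace of $d$-closed smooth $(1,1)$-forms with support in $\overline{\Omega}$. Transposing this through the duality above, and invoking Banach's closed-range theorem, I obtain the surjectivity of the transposed operator $\partial \overline{\partial}$ from extendable $(n-2,n-2)$-currents onto the subspace of $d$-closed extendable $(n-1,n-1)$-currents. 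Applied to $T$, this yields the required extendable $(n-2,n-2)$-current $S$ with $\partial \overline{\partial} S = T$.

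The main obstacle I anticipate is the legitimacy of the dualization step: Banach's closed-range theorem requires that $\partial \overline{\partial}$ actually have closed range on the relevant Fr\'echet spaces, equivalently that the Bott--Chern cohomology groups with support in $\overline{\Omega}$ be Hausdorff. This is where the geometric hypotheses inherited from Corollary \ref{b} — complete pseudoconvexity of $\Omega$, smoothness of $\partial\Omega$, and the auxiliary Stein neighborhood $\Omega'$ produced in that proof — play their role: they supply the regularization and finiteness estimates needed to secure closed range for $\partial \overline{\partial}$ in the support-in-$\overline{\Omega}$ complex. Once this is in place the duality pairing is non-degenerate, and the plan above concludes the proof.
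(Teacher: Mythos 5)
Your overall strategy --- transferring Corollary \ref{b} to extendable currents via the duality between smooth forms supported in $\overline{\Omega}$ and extendable currents on $\Omega$ --- is exactly the paper's. The paper implements it by defining a functional $L_T$ on $\partial\overline{\partial}\,\mathcal{E}^{1,1}_{\overline{\Omega}}(X)$ by $L_T(\partial\overline{\partial}\varphi)=\langle T,\varphi\rangle$ and extending it by Hahn--Banach; your closed-range/transpose formulation is the same idea in different packaging. However, your execution has concrete gaps.

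First, the degree bookkeeping in the transposition is off. The operator that Corollary \ref{b} controls is $\partial\overline{\partial}:\mathcal{E}^{0,0}_{\overline{\Omega}}(X)\to\mathcal{E}^{1,1}_{\overline{\Omega}}(X)$, and its transpose maps extendable $(n-1,n-1)$-currents to extendable $(n,n)$-currents --- not $(n-2,n-2)$-currents to $(n-1,n-1)$-currents. The operator you actually need to show hits $T$ is the transpose of $\partial\overline{\partial}:\mathcal{E}^{1,1}_{\overline{\Omega}}(X)\to\mathcal{E}^{2,2}_{\overline{\Omega}}(X)$, and the closed-range theorem identifies the image of that transpose as the annihilator of $\ker\bigl(\partial\overline{\partial}|_{\mathcal{E}^{1,1}_{\overline{\Omega}}(X)}\bigr)$, not as ``the $d$-closed extendable currents.'' This exposes the second, more serious gap: you must still prove that a $d$-closed extendable $T$ annihilates every $\varphi\in\mathcal{E}^{1,1}_{\overline{\Omega}}(X)$ with $\partial\overline{\partial}\varphi=0$; this is precisely where $dT=0$ and a cohomological vanishing in the correct degree must be used (it is the ``well-definedness of $L_T$'' step in the paper), and your proposal never addresses it --- the remark about $\supp(d\tilde T)\subseteq\partial\Omega$ is stated but never exploited. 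Finally, you explicitly identify the closed-range (equivalently, the continuity of the induced functional needed for Hahn--Banach) as the main obstacle and then dispose of it by an unsubstantiated appeal to ``regularization and finiteness estimates''; that is the analytic heart of the dualization and cannot be left as a promissory note. (In fairness, the paper's own handling of these last two points is also quite loose, but as written your argument does not close them either.)
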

\begin{proof}~\\
We have  $\check{D}^{n-1,n-1} ( \Omega) = ( D^{1,1} ( \overline{\Omega}))'$. Let us show that  $$\check{D}^{n-1,n-1} (\Omega) \cap \ker d = \partial \overline{\partial} \check{D}^{n-2,n-2} ( \Omega).$$
  
\noindent  Let $T \in \check{D}^{n-1,n-1} ( \Omega) \cap \ker d$. We define
\[
\begin{split}
L_T : \partial \overline{\partial} D^{1,1} ( \overline{\Omega}) &\longrightarrow \mathbb{C}\\
 \partial \overline{\partial} \varphi&\longmapsto 
  \langle T , \varphi \rangle,
\end{split} 
\]
\noindent  $L_T$ is well defined because $\partial \overline{\partial} D^{n-2,n-2} ( \overline{\Omega}) = D^{n-1,n-1} (
  \overline{\Omega}) \cap \ker d$ (cf corollaire \ref{b}).

\noindent  If $\varphi$ and $\varphi'$ are two $(1,1)$-differential forms
  such that $\partial \overline{\partial} \varphi = \partial \overline{\partial} \varphi'$, then
  \[ \varphi - \varphi' = \partial \overline{\partial} \theta \quad \rm{et} \quad \langle T, \partial \overline{\partial} \theta
     \rangle = \lim_{j \rightarrow + \infty} \langle T, \partial \overline{\partial} \theta_j \rangle = 0
     \text{} \Rightarrow \langle T, \varphi \rangle = \langle T, \varphi'
     \rangle  \]
    where $ (\theta_{j})_{j\in \mathbb{N}} $ is a sequence of elements of $ D^{2,2}(\Omega) $ which converge uniformly towards $ \theta $. \\
\noindent  $L_T $ is linear because $\partial \overline{\partial}$ : $D^{2,2} ( \overline{\Omega}) \longrightarrow
  \partial \overline{\partial} D^{2,2} ( \overline{\Omega})$ is a linear and continuous map because the inverse image of an open set $ U $ of $\mathbb{C}$ by $L_T$ is an open set. So, we have $L_T \circ \partial \overline{\partial} = T$ since $L_T^{- 1} ( U) = \partial \overline{\partial} \circ T^{- 1} ( U)$. According to the Hahn-Banach theorem, we can extend   $L_T$ into a linear and continuous operator $\tilde{L}_T : D^{1,1} ( \overline{\Omega}) \longrightarrow \mathbb{C}$, it is an extendable current and
   $\partial \overline{\partial} \tilde{L}_T \varphi =  T$
  then $$\langle \partial \overline{\partial} \tilde{L}_T, \varphi \rangle =  \langle T, \varphi \rangle.$$ Set $S =
   \tilde{L}_T $, $S$ is an extendable current and verifies the relationship $\partial \overline{\partial} S = T$.

\end{proof}
\begin{remark}
The corollary \ref{c} is an improvement of \cite[Theorem 1.1]{BDS} without the assumption $H^j(b \Omega) = 0$ for all $1 \leq j < 2n - 1$.
\end{remark}
\section{The case of differential (p,q)-forms and extendable currents }
In this section we generalize these results to differential $(p,q)$-forms.Then we have :
\begin{theo}\label{Era} ~\\
Let $X$ be a complex analytic manifold of dimension $n$ and $\Omega
  \subset X$ a smooth boundary  domain and $1 \leq p,q \leq n$ such that $H^{p + q}_{c,
  d} (X) = H_{c, \partial}^{p - 1, q} (X) = H_{c,
  \overline{\partial}}^{p, q - 1} (X) = 0$. If the natural mapp
  \begin{eqnarray*}
    H^{p - 1, q - 1}_{A} (X) & \longrightarrow & H^{p - 1, q -
    1}_{A} (X \backslash \Omega)
  \end{eqnarray*}
  is an isomorphism then $H_{BC, \overline{\Omega}}^{p, q} (X) = 0.$
\end{theo}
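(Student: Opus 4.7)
The plan is to mimic the argument of Theorem~\ref{s}, extending it from bidegree $(1,1)$ to general $(p,q)$, with the Applie isomorphism replacing the pluriharmonic extension from Theorem~\ref{a} used at the end of that proof.

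I would start with $f\in\mathcal{E}^{p,q}(X)$ a Bott--Chern representative supported in $\overline{\Omega}$, so $df=0$. By $H^{p+q}_{c,d}(X)=0$, write $f=dg$ with $g\in\mathcal{D}^{p+q-1}(X)$, and decompose $g=\sum_{r+s=p+q-1}g^{r,s}$; only the central components $g^{p-1,q}$ and $g^{p,q-1}$ can contribute to the $(p,q)$-part of $dg$. The key preparatory step is to reduce to $g=g^{p-1,q}+g^{p,q-1}$. For $(p,q)=(1,1)$ this is automatic (and this is why the proof of Theorem~\ref{s} ended immediately); for $p+q\geq 3$ one must kill the remaining bidegree components by replacing $g$ with $g+d\eta$ for an appropriately chosen $\eta\in\mathcal{D}^{p+q-2}(X)$, via a zig-zag along the Hodge diamond driven by the hypotheses $H^{p-1,q}_{c,\partial}(X)=0$ and $H^{p,q-1}_{c,\overline{\partial}}(X)=0$ (together with their complex conjugates), and with $H^{p+q}_{c,d}(X)=0$ re-entering at each step to close the obstructions.

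Once only the two central components of $g$ remain, bidegree forces $\overline{\partial}g^{p-1,q}=0$ and $\partial g^{p,q-1}=0$, and the Dolbeault-type vanishings produce $w_1,w_2\in\mathcal{D}^{p-1,q-1}(X)$ with $g^{p-1,q}=\overline{\partial}w_1$ and $g^{p,q-1}=\partial w_2$. Setting $v:=w_1-w_2$ gives a compactly supported $(p-1,q-1)$-form with $\partial\overline{\partial}v=\partial g^{p-1,q}+\overline{\partial}g^{p,q-1}=f$. Since $f\equiv 0$ on $X\setminus\overline{\Omega}$, the restriction of $v$ defines a class in $H^{p-1,q-1}_{A}(X\setminus\Omega)$. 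By the assumed isomorphism $H^{p-1,q-1}_{A}(X)\xrightarrow{\sim}H^{p-1,q-1}_{A}(X\setminus\Omega)$ I pick $\tilde v\in\mathcal{E}^{p-1,q-1}(X)$ with $\partial\overline{\partial}\tilde v=0$ and $v-\tilde v=\partial\alpha+\overline{\partial}\beta$ on $X\setminus\overline{\Omega}$ for suitable $\alpha,\beta$. Using smoothness of $\partial\Omega$ I extend $\alpha,\beta$ to global smooth forms $\tilde\alpha,\tilde\beta$ on $X$ (Whitney/cutoff extension) and set
\[
u:=v-\tilde v-\partial\tilde\alpha-\overline{\partial}\tilde\beta,
\]
so $\partial\overline{\partial}u=f$ (the three correction terms are annihilated by $\partial\overline{\partial}$) and $u\equiv 0$ on $X\setminus\overline{\Omega}$, whence $\supp u\subset\overline{\Omega}$, proving $H^{p,q}_{BC,\overline{\Omega}}(X)=0$.

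The main obstacle is the bidegree-reduction step for $p+q\geq 3$: the non-central components of $g$ are coupled through several bidegree parts of the identity $dg=f$, and one has to verify that the obstructions to removing them by $d\eta$-perturbations really lie in the compactly supported $\partial$- and $\overline{\partial}$-cohomology groups assumed to vanish; this is where the precise combination of hypotheses in the statement is used. A secondary but still delicate point is the extension of the Applie-equivalence witnesses $\alpha,\beta$ across $\partial\Omega$, which is what forces the smooth-boundary assumption on $\Omega$.
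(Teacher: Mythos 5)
Your proposal follows essentially the same route as the paper's own proof: solve $dg=f$ with compact support using $H^{p+q}_{c,d}(X)=0$, isolate the components $g^{p-1,q}$ and $g^{p,q-1}$, write them as $\overline{\partial}w_1$ and $\partial w_2$ by the compactly supported Dolbeault-type vanishings, set $v=w_1-w_2$ so that $\partial\overline{\partial}v=f$, and then correct $v$ to have support in $\overline{\Omega}$ via the assumed isomorphism of Aeppli groups. Two points of comparison are worth recording. First, the step you flag as the main obstacle --- that for $p+q\geq 3$ the form $g$ may carry bidegree components other than $(p-1,q)$ and $(p,q-1)$, so that $\overline{\partial}g^{p-1,q}=0$ and $\partial g^{p,q-1}=0$ do not follow from bidegree reasons alone --- is simply not addressed in the paper: the proof there asserts outright that $g=g_1+g_2$ with only the two central components. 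Your zig-zag reduction is the right idea but is only sketched, and it is not evident that the three vanishing hypotheses in the statement suffice to kill all the off-central components; so this remains a gap, but it is a gap you share with (and, unlike the paper, explicitly acknowledge in) the published argument. Second, in the correction step you use the genuine Aeppli coboundary $v-\tilde v=\partial\alpha+\overline{\partial}\beta$, which matches the definition of $H^{p-1,q-1}_{A}$ given in the paper's notation section, whereas the paper writes the equivalence as $u=(h_1-h_2)+\partial\overline{\partial}v$, i.e.\ a Bott--Chern-type coboundary; both corrections are annihilated by $\partial\overline{\partial}$, but yours is the one consistent with the stated definition, and it also lets you treat the cases $p=q=1$ and $p,q\geq 2$ uniformly where the paper splits them.
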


\begin{proof}~\\
  Let $[f] \in H^{p, q}_{BC, \overline{\Omega}} (X) \Longrightarrow f$ is
  a differential $(p + q)$-form $d$-closed with compact support.
Since $H^{p + q}_{c, d} (X) = 0$, then there exists $g$ a differential $(p + q -
  1)$-form with compact support such that $d g = f$. We
  can decompose  $g$ into the sum of a  differential $(p - 1, q)$-form $g_1$ and a differential $(p, q -
  1)$-form $g_2$; i.e $g = g_1 + g_2$. The saturation of the bidegree holds  $\overline{\partial} g_1 = 0$ and
  $\partial g_2 = 0$ so there are two differential forms $h_1$ and $h_2$ with
  compact support such that $g_1 = \overline{\partial} h_1$ and $g_2 = \partial h_2.$
    Hence
  \begin{eqnarray*}
    f & = & \partial g_1 + \overline{\partial} g_2\\
    & = & \partial \overline{\partial} h_1 + \overline{\partial} \partial h_2\\
    & = & \partial \overline{\partial} (h_1 - h_2) .
  \end{eqnarray*}
  The solution $h_1 -h_2$ we get has compact support. We end the proof by correcting this solution to new one with exact support. 
\item[•] If $p=q=1$, then $h_1-h_2$ is a pluriharmonic function defined in $X \setminus \overline{\Omega}$. Under the theorem's assumptions, we have a natural isomorphism map 
  \begin{eqnarray*}
    H^{0, 0}_{A} (X) & \longrightarrow & H^{0, 0}_{A} (X \backslash \Omega).
  \end{eqnarray*}
   Then, there is $u$ a pluriharmonic function on $X$
  such that
  
  $[u] = [(h_1 - h_2)_{| X \backslash \overline{\Omega} }]
  \Longrightarrow u_{| X \backslash \overline{\Omega}} = (h_1 - h_2)_{| X \backslash \overline{\Omega} }$. Set $\widetilde{h_1 - h_2} = h_1 - h_2
  - u$ which is compactly supported on
  $\Omega$ and $\partial \overline{\partial} (\widetilde{h_1 - h_2}) = f
  \textnormal{ on } \Omega .$
  
  \item[•] If $p,q \geq 2,$ $h_1-h_2$ is a $(p-1,q-1)$ form with compact support.
  
 We have, $\partial \overline{\partial} (h_1 - h_2) = 0$ on $X \backslash
  \overline{\Omega}.$ Under the theorem's assumptions, we have a natural isomorphism map 
  \begin{eqnarray*}
    H^{p - 1, q - 1}_{A} (X) & \longrightarrow & H^{p - 1, q -
    1}_{A} (X \backslash \Omega).
  \end{eqnarray*}
   Then, there is $u$ a $(p-1,q-1)$ pluriharmonic form on $X$
  such that
  
  $[u] = [(h_1 - h_2)_{| X \backslash \overline{\Omega} }]
  \Longrightarrow u_{| X \backslash \overline{\Omega}} = (h_1 - h_2)_{| X \backslash \overline{\Omega} } +
  \partial \overline{\partial} v$, where $v$ is a $(p-2,q-2)$ form defined in $X\setminus \overline{\Omega}   .$ 
  Hence $(h_1 - h_2)_{| X \backslash
  \overline{\Omega} } = u - \partial \overline{\partial} \tilde{v}$ where
  $\tilde{v}$ is an extension of $v$. Set $\widetilde{h_1 - h_2} = h_1 - h_2
  - u + \partial \tilde{\partial} \tilde{v}$ which is compactly supported on
  $\Omega$ and $\partial \overline{\partial} (\widetilde{h_1 - h_2}) = f
  \textnormal{ on } \Omega .$
  
  \ 
\end{proof}
As a consequence, we have :
\begin{coro}\label{M}
Let $X$ be a complex analytic manifold of complex dimension $n \geq 2$ and $\Omega \subset X$ a completely pseudoconvex domain with smooth boundary. If $T$ is a $(n-p,n-q)$-extendable current, $d$-closed and defined on $\Omega$, then there exists a $(n-p-1,n-q-1)$-extendable current $S$ such that $\partial \overline{\partial} S= T$.
\end{coro}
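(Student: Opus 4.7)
The plan is to derive Corollary \ref{M} from Theorem \ref{Era} by the Hahn--Banach duality procedure that produced Corollary \ref{c} from Corollary \ref{b}. Via the pairing $\check{D}^{n-p,n-q}(\Omega)=(D^{p,q}(\overline{\Omega}))'$, the target current $S\in\check{D}^{n-p-1,n-q-1}(\Omega)$ is sought as a continuous linear functional on $D^{p+1,q+1}(\overline{\Omega})$, and the equation $\partial\overline{\partial}S=T$ reads $\langle S,\partial\overline{\partial}\varphi\rangle=\langle T,\varphi\rangle$ for all $\varphi\in D^{p,q}(\overline{\Omega})$.

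First I would check that the hypotheses of Theorem \ref{Era} hold in bidegree $(p+1,q+1)$. Following the strategy of Corollary \ref{b}, I would use \cite[Theorem 2.3]{SS} to enlarge $\Omega$ to a completely strictly pseudoconvex (hence Stein) neighbourhood $\Omega'\subset\subset X$ whose ordinary de Rham cohomology vanishes through degree $p+q+2$. Poincar\'e duality on $\Omega'$ yields $H^{p+q+2}_{c,d}(\Omega')=0$, while Serre duality together with the Cartan B vanishing on the Stein domain $\Omega'$ provides the compactly supported Dolbeault vanishings $H^{p,q+1}_{c,\partial}(\Omega')=H^{p+1,q}_{c,\overline{\partial}}(\Omega')=0$. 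The remaining Aeppli extension hypothesis $H^{p,q}_A(\Omega')\to H^{p,q}_A(\Omega'\setminus\Omega)$ has to be verified from the smoothness and complete pseudoconvexity of $\partial\Omega$. Theorem \ref{Era} then delivers $H^{p+1,q+1}_{BC,\overline{\Omega}}(\Omega')=H^{p+1,q+1}_{BC,\overline{\Omega}}(X)=0$, which says $D^{p+1,q+1}(\overline{\Omega})\cap\ker d=\partial\overline{\partial}D^{p,q}(\overline{\Omega})$.

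With this vanishing in hand I would mirror the construction of Corollary \ref{c}. Define
\[L_T:\partial\overline{\partial}D^{p,q}(\overline{\Omega})\longrightarrow\mathbb{C},\qquad L_T(\partial\overline{\partial}\varphi)=\langle T,\varphi\rangle,\]
whose well-definedness uses that $dT=0$ forces $\partial T=\overline{\partial}T=0$ by bidegree separation, so whenever $\partial\overline{\partial}\varphi=\partial\overline{\partial}\varphi'$ any decomposition $\varphi-\varphi'=\partial\alpha+\overline{\partial}\beta$ with compactly supported $\alpha,\beta$ pairs trivially with $T$. By the Hahn--Banach theorem, extend $L_T$ to a continuous linear functional $\widetilde{L}_T:D^{p+1,q+1}(\overline{\Omega})\to\mathbb{C}$, and set $S=\widetilde{L}_T\in\check{D}^{n-p-1,n-q-1}(\Omega)$. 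Then by construction $\langle\partial\overline{\partial}S,\varphi\rangle=\langle S,\partial\overline{\partial}\varphi\rangle=\langle T,\varphi\rangle$ for every $\varphi\in D^{p,q}(\overline{\Omega})$, so $\partial\overline{\partial}S=T$.

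The main obstacle is the Aeppli extension isomorphism $H^{p,q}_A(\Omega')\to H^{p,q}_A(\Omega'\setminus\Omega)$ for general bidegree $(p,q)$: for $p=q=0$ it reduces to the classical pluriharmonic extension theorem of \cite{WG} used implicitly in Corollary \ref{c}, but in higher bidegrees a genuine $\partial\overline{\partial}$-extension result in the smooth completely pseudoconvex setting is required. A secondary technicality is the existence of the decomposition $\varphi-\varphi'=\partial\alpha+\overline{\partial}\beta$ with $\alpha,\beta$ still supported in $\overline{\Omega}$ (equivalently, an Aeppli-type vanishing with prescribed support), which is needed for the pairing with $T$ to be legitimate; this would follow from the same extension results or from the approximation argument sketched after Corollary \ref{c}.
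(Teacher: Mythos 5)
Your proposal follows exactly the route the paper takes: its entire proof of Corollary \ref{M} is the sentence ``the proof is identical to that of the corollary \ref{c}'', i.e.\ the Hahn--Banach duality argument applied to the functional $L_T(\partial\overline{\partial}\varphi)=\langle T,\varphi\rangle$, with well-definedness resting on the vanishing of $H^{p+1,q+1}_{BC,\overline{\Omega}}$. Your write-up is in fact more careful than the paper's, and the obstacle you single out is the real one: Corollary \ref{M} as stated carries no Aeppli-isomorphism hypothesis, yet Theorem \ref{Era} (the only available source of the needed Bott--Chern vanishing in bidegree $(p+1,q+1)$) requires the map $H^{p,q}_{A}(X)\to H^{p,q}_{A}(X\setminus\Omega)$ to be an isomorphism, and for $(p,q)\neq(0,0)$ this is not supplied by the pluriharmonic Hartogs theorem of \cite{WG} nor verified anywhere in the paper. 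So your proof is incomplete at precisely the point you flag --- but so is the paper's; you have not missed an argument that the authors actually give. The secondary point you raise (that $\varphi-\varphi'=\partial\alpha+\overline{\partial}\beta$ must hold with $\alpha,\beta$ supported in $\overline{\Omega}$ for the pairing with $T$ to make sense) is likewise a legitimate concern that the paper's proof of Corollary \ref{c} handles only by an approximation sketch.
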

\begin{proof}
The proof is identical to that of the corollary \ref{c}.
\end{proof}

\section{Conclusion}
In this paper, we solve the $\partial\bar{\partial}$-problem by a direct method based on Bott-Chern cohomology. What is original in this case is that, unlike in papers \cite{BDS},\cite{SS},\cite{SBD} and \cite{SBIH}, we did not use the null hypothesis of the boundary cohomology group of the domain, which was a necessary condition for the solution with discribed support.

\end{document}